
\documentclass[12pt]{amsart}

\usepackage{amsmath}
\usepackage{amssymb}
\usepackage{amsfonts}
\usepackage{amsthm}
\usepackage{enumerate}
\usepackage{hyperref}
\usepackage{color}

\textheight=600pt
\textwidth=435pt
\oddsidemargin=17pt
\evensidemargin=17pt

\theoremstyle{plain}
\newtheorem{thm}{Theorem}[section]

\newtheorem{lem}[thm]{Lemma}

\theoremstyle{definition}
\newtheorem{dfn}[thm]{Definition}

\newtheorem{rem}[thm]{Remark}

\newtheorem{dfns-rems}[thm]{Definitions and Remarks}
\newtheorem{notas-rems}[thm]{Notations and Remarks}
\newtheorem{exmps-rems}[thm]{Examples and Remarks}


\begin{document}


\title[Stanley depth of weakly polymatroidal ideals]{Stanley depth of weakly polymatroidal ideals}


\author[S. A. Seyed Fakhari]{S. A. Seyed Fakhari}

\address{S. A. Seyed Fakhari, School of Mathematics, Institute for Research
in Fundamental Sciences (IPM), P.O. Box 19395-5746, Tehran, Iran.}

\email{fakhari@ipm.ir}

\urladdr{http://math.ipm.ac.ir/fakhari/}


\begin{abstract}
Let $\mathbb{K}$ be a field and $S=\mathbb{K}[x_1,\dots,x_n]$ be the
polynomial ring in $n$ variables over the field $\mathbb{K}$. In this
paper, it is shown that Stanley's conjecture holds for $S/I$, if
$I$ is a weakly polymatroidal ideal.
\end{abstract}


\subjclass[2000]{Primary: 13C15, 05E99; Secondary: 13C13}


\keywords{Weakly polymatroidal ideal, Stanley depth}


\thanks{This research was in part supported by a grant from IPM (No. 93130422)}


\maketitle


\section{Introduction} \label{sec1}

Let $\mathbb{K}$ be a field and $S=\mathbb{K}[x_1,\dots,x_n]$ be the
polynomial ring in $n$ variables over the field $\mathbb{K}$. Let $M$ be a nonzero
finitely generated $\mathbb{Z}^n$-graded $S$-module. Let $u\in M$ be a
homogeneous element and $Z\subseteq \{x_1,\dots,x_n\}$. The $\mathbb
{K}$-subspace $u\mathbb{K}[Z]$ generated by all elements $uv$ with $v\in
\mathbb{K}[Z]$ is called a {\it Stanley space} of dimension $|Z|$, if it is
a free $\mathbb{K}[\mathbb{Z}]$-module. Here, as usual, $|Z|$ denotes the
number of elements of $Z$. A decomposition $\mathcal{D}$ of $M$ as a finite
direct sum of Stanley spaces is called a {\it Stanley decomposition} of
$M$. The minimum dimension of a Stanley space in $\mathcal{D}$ is called
{\it Stanley depth} of $\mathcal{D}$ and is denoted by ${\rm
sdepth}(\mathcal {D})$. The quantity $${\rm sdepth}(M):=\max\big\{{\rm
sdepth}(\mathcal{D})\mid \mathcal{D}\ {\rm is\ a\ Stanley\ decomposition\
of}\ M\big\}$$ is called {\it Stanley depth} of $M$. Stanley \cite{s}
conjectured that $${\rm depth}(M) \leq {\rm sdepth}(M)$$ for all
$\mathbb{Z}^n$-graded $S$-modules $M$. For a reader friendly introduction
to Stanley decomposition, we refer to \cite{psty} and for a nice survey on this topic we refer to \cite{h}.

Let $I$ be a monomial ideal of $S=\mathbb{K}[x_1,\dots,x_n]$ which is
generated in a single degree and assume that $G(I)$ is the set of minimal
monomial generators of $I$. The ideal $I$ is called {\it polymatroidal} if
the following exchange condition is satisfied: For monomials $u=x^{a_1}_1
\ldots x^{a_n}_ n$ and $v = x^{b_1}_1\ldots x^{b_n}_ n$ belonging to $G(I)$
and for every $i$ with $a_i > b_i$, one has $j$ with $a_j < b_j$ such that
$x_j(u/x_i)\in G(I)$.

In \cite{psy}, the authors prove that $S/I^k$ satisfies Stanley's conjecture for every polymatroidal ideal $I$ and every $k\gg 0$. In this paper, we prove a stronger result. We show that $S/I$ satisfies Stanley's conjecture for every polymatroidal ideal $I$. Since every power of a polymatroidal
ideal is again a polymatroidal ideal (see \cite[Theorem 12.6.3]{hh'}), this result implies that $S/I^k$ satisfy Stanley's conjecture, for every $k\geq 1$. In fact, we prove stronger result. We show that $S/I$ satisfies Stanley's conjecture, provided that $I$ is a weakly polymatroidal ideal (see Theorem \ref{main}).


\section{Main results} \label{sec2}

Weakly polymatroidal ideals are generalization of polymatroidal ideals and
they are defined as follows.

\begin{dfn} [\cite{mm}, Definition 1.1] \label{weak}
A monomial ideal $I$ of $S=\mathbb{K}[x_1,\dots,x_n]$ is called {\it weakly
polymatroidal} if for every two monomials $u = x_1^{a_1} \ldots x_n^{a_n}$
and $v = x_1^{b_1} \ldots x_n^{b_n}$ in $G(I)$ such that $a_1 = b_1,
\ldots, a_{t-1} = b_{t-1}$ and $a_t > b_t$ for some $t$, there exists $j >
t$ such that $x_t(v/x_j)\in I$.
\end{dfn}

Let $I$ be a monomial ideal and let $G(I)$ be the set of minimal monomial generators of $I$. Assume that $u_1\prec u_2 \prec \ldots \prec u_t$ is a linear order on $G(I)$. We say that $I$ has {\it linear quotients with respect to $\prec$}, if for every $2\leq i\leq t$, the ideal $(u_1, \ldots, u_{i-1}):u_i$ is generated by a subset of variables. We say that $I$ has {\it linear quotients}, if it has linear quotients with respect to a linear order on $G(I)$. It is known that every weakly polymatroidal ideal $I$ has linear quotients with respect to the pure lexicographical
order $<_{\rm purelex}$ induced by $x_1 > x_2 > \ldots > x_n$ (see \cite[Theorem 12.7.2]{hh'}).
The following result due to Sharifan and Varbaro has crucial role in the proof of our main result.

\begin{thm} [\cite{sv}, Corollary 2.7] \label{pd}
Let $I\subseteq S=\mathbb{K}[x_1,\dots,x_n]$ be a monomial ideal. Assume that $I$ has linear quotients with respect to $u_1\prec u_2 \prec \ldots \prec u_t$, where $\{u_1, \ldots, u_t\}$ is the set of minimal
monomial generators of $I$. For every $2\leq i\leq t$, let $n_i$ be the number of variables which generate $(u_1, \ldots, u_{i-1}):u_i$. Then $${\rm pd}_S(S/I)={\rm max}\{n_i: 2\leq i\leq t\}+1.$$
\end{thm}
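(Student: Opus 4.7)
The plan is to prove Theorem \ref{pd} via the iterated mapping cone construction, the standard device for building resolutions of ideals with linear quotients. The argument proceeds by induction on $t$ and simultaneously establishes the upper bound ${\rm pd}_S(S/I) \leq \max\{n_i\}+1$ and the matching lower bound.

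First, I would set $I_i = (u_1,\ldots,u_i)$ and $J_i = I_{i-1}:u_i$ for $2\leq i\leq t$; by the linear quotients hypothesis, $J_i$ is generated by $n_i$ variables, say $x_{j_1},\ldots,x_{j_{n_i}}$. The short exact sequence
$$0 \longrightarrow (S/J_i)(-\deg u_i) \stackrel{\cdot u_i}{\longrightarrow} S/I_{i-1} \longrightarrow S/I_i \longrightarrow 0$$
is the workhorse. Since $J_i$ is generated by variables (hence by a regular sequence), $S/J_i$ is resolved by the length-$n_i$ Koszul complex on these variables, so ${\rm pd}_S(S/J_i)=n_i$. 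Taking the mapping cone of the lift of $\cdot u_i$ between the Koszul complex of $S/J_i$ and an inductively constructed free resolution of $S/I_{i-1}$ yields a free resolution of $S/I_i$. Iterating from $i=2$ up to $i=t$ produces a free resolution of $S/I$ of length at most $\max_{2\leq i\leq t}\{n_i\}+1$, which gives the upper bound.

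For the lower bound, I would verify that the iterated mapping cone is in fact minimal, so that no cancellations could shorten the resolution. The cleanest route is through the Herzog--Takayama formula ${\beta_i(S/I) = \sum_{k\geq 2}\binom{n_k}{i-1}}$, valid for any ideal with linear quotients, which shows the Betti number is nonzero exactly when $i\leq \max_k\{n_k\}+1$. Alternatively, one can verify minimality directly by checking that every differential in the mapping cone has entries in the maximal ideal $\mathfrak{m}=(x_1,\ldots,x_n)$: the Koszul differentials contribute variable entries, and the connecting maps are multiplication by the nonconstant monomial $u_i$.

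The main obstacle is precisely this minimality verification. The upper bound is essentially automatic from the mapping cone construction, but equality requires that linear quotients preclude unit entries appearing in the differentials. This is where the hypothesis that each $J_i$ is generated by variables, rather than by an arbitrary monomial subideal, is essential: it forces the attaching maps in the mapping cone to land in $\mathfrak{m}$, preserving the predicted Betti ranks and yielding ${\rm pd}_S(S/I) = \max_i\{n_i\}+1$ exactly.
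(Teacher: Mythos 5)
The paper itself offers no proof of this statement; it is quoted directly from Sharifan--Varbaro \cite{sv}, so there is nothing in the source to compare against. Your plan --- iterated mapping cones over the short exact sequences $0 \to (S/J_i)(-\deg u_i) \xrightarrow{\cdot u_i} S/I_{i-1} \to S/I_i \to 0$, with the Koszul complex resolving $S/J_i$ --- is indeed the standard and correct route, and the upper bound ${\rm pd}_S(S/I)\le \max_i\{n_i\}+1$ falls out exactly as you describe. You have also correctly located where the real work is: the equality hinges on the iterated mapping cone being \emph{minimal}.

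It is precisely at that point, however, that your argument has a genuine gap. You assert that ``the connecting maps are multiplication by the nonconstant monomial $u_i$,'' but only $\phi_0\colon K_0(-\deg u_i)\to F_0$ is literally $\cdot u_i$; the higher comparison maps $\phi_k$ ($k\ge 1$) are lifts chosen to commute with the differentials, and they are matrices whose entries must be shown to lie in $\mathfrak{m}$. Since linear quotients do not force $\deg u_1\le\cdots\le\deg u_t$, a basis element of $F_k$ can sit in internal degree $\deg u_m+k-1$ with $\deg u_m=\deg u_i+1$, making the corresponding entry of $\phi_k$ degree zero a priori --- a potential unit that would destroy minimality. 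Ruling this out is exactly where minimality of the generating set enters (e.g.\ for $\phi_1$: if $x_ju_i=w u_m$ with $\deg u_m=\deg u_i+1$ then $w$ is a unit, forcing $u_i\mid u_m$, a contradiction), and for $k\ge 2$ the bookkeeping is more involved; none of this is in your sketch. Your fallback --- invoking the Betti-number formula $\beta_i(S/I)=\sum_k\binom{n_k}{i-1}$ --- is circular in this generality: for ideals with linear quotients that are \emph{not} generated in a single degree, that formula (equivalently, the minimality of the iterated mapping cone) \emph{is} the content of the Sharifan--Varbaro paper you would be quoting. Herzog--Takayama establish it only under extra hypotheses (a regular decomposition function, or equigenerated linear quotients), which is not assumed here. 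So the skeleton of your proof is right, but the minimality step --- the entire point of the theorem --- is not actually carried out.
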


Keeping the notations of Theorem \ref{pd} in mind, Auslander--Buchsbaum formula implies that $${\rm depth}_S(S/I)=n-{\rm max}\{n_i: 2\leq i\leq t\}-1.$$

Let $I$ be a weakly polymatroidal ideal. In order to prove Stanley's conjecture for $S/I$, we need the following lemma. It shows that the depth of a weakly polymatroidal ideal does not decrease under the elimination of $x_1$. As usual for every monomial $u$, the {\it support} of $u$, denoted by ${\rm Supp}(u)$, is the set of variables which divide $u$.

\begin{lem} \label{del}
Let $I$ be a weakly polymatroidal ideal ideal of $S=\mathbb{K}[x_1,\ldots,x_n]$, such that $$x_1\in \bigcup_{u\in G(I)}{\rm Supp}(u).$$Let $S'=\mathbb{K}[x_2, \ldots, x_n]$ be the polynomial ring obtained from $S$ by deleting the variable $x_1$ and consider the ideal $I'=I\cap S'$. Then ${\rm depth}_{S'}(S'/I')\geq {\rm depth}_S(S/I)$.
\end{lem}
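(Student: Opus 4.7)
My plan is to apply Theorem \ref{pd} (Sharifan--Varbaro) together with the Auslander--Buchsbaum formula, so that the desired depth inequality is converted into a combinatorial comparison of the linear-quotient invariants of $I$ and $I'$, and then to verify that combinatorial comparison by exploiting the exchange condition of Definition \ref{weak}.

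First I would order the minimal generators $G(I)=\{u_1\prec u_2\prec\cdots\prec u_t\}$ by the pure lex order induced by $x_1>x_2>\cdots>x_n$; by \cite[Theorem 12.7.2]{hh'} the ideal $I$ has linear quotients in this order. Since pure lex compares $x_1$-degree first, there is an integer $s$ with $0\le s<t$ (the inequality $s<t$ follows from the hypothesis that $x_1$ divides some generator) such that $x_1\nmid u_i$ if and only if $i\le s$. Consequently $G(I')=\{u_1,\ldots,u_s\}$, and one checks directly that $I'$ is itself weakly polymatroidal in $S'$ with the same pure lex ordering. For every $i$ with $2\le i\le s$, the colon $(u_1,\ldots,u_{i-1}):u_i$ can be computed identically in $S$ or in $S'$, and it is generated by a subset of $\{x_2,\ldots,x_n\}$; in particular $x_1$ cannot lie in the colon, because $x_1u_i$ carries positive $x_1$-degree that no $u_k$ with $k\le s$ can absorb. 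Writing $n_i$ for the number of variables generating this colon, the values $n_i$ therefore agree for $I$ and $I'$ in the range $2\le i\le s$.

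Set $N:=\max\{n_i:2\le i\le s\}$, taking the empty maximum to be $0$. Theorem \ref{pd} then gives ${\rm pd}_S(S/I)=1+\max_{2\le i\le t}n_i$ and ${\rm pd}_{S'}(S'/I')=1+N$, so Auslander--Buchsbaum converts the inequality ${\rm depth}_{S'}(S'/I')\ge{\rm depth}_S(S/I)$ into the purely combinatorial statement
$$\max_{2\le i\le t}n_i\ \ge\ N+1,$$
i.e., into exhibiting some $k$ with $s<k\le t$ satisfying $n_k\ge N+1$.

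The main obstacle is this last estimate. I would fix $i^*\le s$ attaining $n_{i^*}=N$ together with the set $V^*\subseteq\{x_2,\ldots,x_n\}$ of variables generating $(u_1,\ldots,u_{i^*-1}):u_{i^*}$, and use the exchange condition of Definition \ref{weak} to embed $V^*$ into the colon of a suitable later generator, together with at least one additional variable. Concretely, pairing $u_{i^*}$ with a generator of maximal $x_1$-degree and invoking the exchange at position $1$ produces an $x_1$-involving generator $u_k$ closely tied to $u_{i^*}$; and for each $x_j\in V^*$ with $I'$-witness $u_m\mid x_j u_{i^*}$, pairing $u_k$ with $u_m$ and invoking the exchange again transfers $x_j$ into $(u_1,\ldots,u_{k-1}):u_k$. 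The delicate bookkeeping will be to guarantee that the generators produced by these exchanges are pairwise distinct, all strictly below $u_k$ in pure lex, and collectively furnish $N+1$ distinct variables in $(u_1,\ldots,u_{k-1}):u_k$, yielding $n_k\ge N+1$.
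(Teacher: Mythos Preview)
Your ordering is reversed, and this is not merely cosmetic. The linear-quotient order for a weakly polymatroidal ideal in \cite[Theorem~12.7.2]{hh'} lists the pure-lex \emph{largest} generator first and proceeds downward; with your convention $u_1\prec u_2\prec\cdots$ meaning $u_1<_{\rm purelex}u_2<_{\rm purelex}\cdots$, the colon ideals $(u_1,\ldots,u_{i-1}):u_i$ need not be generated by variables. For instance $I=(x_1,x_2^2)$ is weakly polymatroidal, but in your order $u_1=x_2^2$, $u_2=x_1$, and $(u_1):u_2=(x_2^2)$. Hence Theorem~\ref{pd} does not apply and your computation of projective dimensions collapses at the outset.

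Once the order is corrected the $x_1$-divisible generators come \emph{first}, say $x_1\mid u_1,\ldots,u_s$ and $G(I')=\{u_{s+1},\ldots,u_t\}$, and this placement makes the combinatorial step immediate, so your ``delicate bookkeeping'' is unnecessary. For each $i$ with $s+2\le i\le t$ one has trivially $(u_{s+1},\ldots,u_{i-1}):u_i\subseteq(u_1,\ldots,u_{i-1}):u_i$, and the only additional fact needed is that $x_1$ lies in the right-hand colon but not in the left-hand one. A \emph{single} application of Definition~\ref{weak} at position $t=1$ (comparing $u_i$ with any $u_k$, $k\le s$) yields $j\ge 2$ with $x_1(u_i/x_j)\in I$; the minimal generator dividing this monomial must lie among $u_1,\ldots,u_s$, since otherwise it would properly divide $u_i$. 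Thus $x_1u_i\in(u_1,\ldots,u_s)\subseteq(u_1,\ldots,u_{i-1})$, while clearly $x_1\notin(u_{s+1},\ldots,u_{i-1}):u_i$ because none of $u_{s+1},\ldots,u_{i-1}$ involves $x_1$. Consequently the number of variable generators of the $I$-colon at $u_i$ exceeds that of the $I'$-colon at the \emph{same} $u_i$ by at least one, and taking maxima over $i$ finishes the argument. There is no need to single out an extremal $i^*$, manufacture a separate $u_k$ on the other side, or transfer witnesses between generators.
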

\begin{proof}
We first note that $I\neq 0$, sine $$x_1\in \bigcup_{u\in G(I)}{\rm Supp}(u).$$If $I'=0$, then ${\rm depth}_{S'}(S'/I')=n-1\geq {\rm depth}_S(S/I)$. Therefore, assume that $I'\neq 0$. Let $G(I)=\{u_1, \ldots, u_t\}$ be the set of minimal monomial generators of $I$ and suppose that $u_t <_{\rm purelex} u_{t-1}<_{\rm purelex} \ldots <_{\rm purelex} u_1$. Since $I'\neq 0$, the assumption implies that there exists an integer $1\leq s \leq t-1$ such that $x_1$ divides $u_1, \ldots, u_s$ and $x_1$ does not divide $u_{s+1}, \ldots, u_t$. It is clear that $I'$ is a weakly polymatroidal ideal which is minimally generated by the set $\{u_{s+1}, \ldots, u_t\}$. It follows from the definition of weakly polymatroidal ideals that for every $s+1\leq i \leq t$, there exists an integer $i_j\geq 2$ such that $x_1(u_i/x_{i_j})\in I$. Thus, there exists a monomial, say $u_{i_j}\in G(I)$, such that $u_{i_j}$ divides $x_1(u_i/x_{i_j})$. It is obvious that $1\leq i_j\leq s$, because otherwise $u_{i_j}$ divides $u_i/x_{i_j}$ which is a contradiction. Hence, for every $i$ with $s+2\leq i \leq t$ we have$$(x_1)+((u_{s+1}, \ldots, u_{i-1}):u_i)\subseteq (u_{i_j}, u_{s+1}, \ldots, u_{i-1}):u_i\subseteq (u_1, \ldots, u_{i-1}):u_i.$$On the other hand, it is clear that$$x_1\notin (u_{s+1}, \ldots, u_{i-1}):u_i$$Therefore, by Theorem \ref{pd}, we conclude that ${\rm pd}_S(S/I)\geq {\rm pd}_{S'}(S'/I')+1$. Now Auslander--Buchsbaum completes the proof of the Lemma.
\end{proof}

We are now ready to prove the main result of this paper.

\begin{thm} \label{main}
Let $I$ be a weakly polymatroidal ideal of $S=\mathbb{K}[x_1,\ldots,x_n]$ such that $I\neq S$. Then $S/I$ satisfies Stanley's conjecture.
\end{thm}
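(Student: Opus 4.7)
The plan is to prove the theorem by induction on $n$, with a secondary induction on the quantity $\deg_{x_1}(I) := \max\{\deg_{x_1}(u) : u \in G(I)\}$. The base case $n = 1$ is immediate, since then $I$ is either zero or a power of $x_1$ and the inequality $\mathrm{depth}(S/I) \leq \mathrm{sdepth}(S/I)$ is trivial.

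For the inductive step, if $\deg_{x_1}(I) = 0$, then $I = I'S$ with $I' \subset S' := \mathbb{K}[x_2, \ldots, x_n]$ weakly polymatroidal, so $S/I \cong (S'/I')[x_1]$; the standard equalities $\mathrm{depth}_S(S/I) = \mathrm{depth}_{S'}(S'/I') + 1$ and $\mathrm{sdepth}_S(S/I) = \mathrm{sdepth}_{S'}(S'/I') + 1$ then close this case via the primary induction. Otherwise $\deg_{x_1}(I) \geq 1$, and I would work with the $\mathbb{Z}^n$-graded $\mathbb{K}$-vector space decomposition
\[
S/I \;=\; (S'/I') \;\oplus\; x_1 \cdot S/(I : x_1),
\]
where $I' := I \cap S'$. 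Concatenating a Stanley decomposition of $S'/I'$ in $S'$ (whose Stanley spaces $u\mathbb{K}[Z]$ have $Z \subseteq \{x_2, \ldots, x_n\}$, so they remain Stanley spaces of the same dimension in $S$) with $x_1$ times a Stanley decomposition of $S/(I:x_1)$ yields
\[
\mathrm{sdepth}_S(S/I) \;\geq\; \min\bigl\{\mathrm{sdepth}_{S'}(S'/I'),\, \mathrm{sdepth}_S(S/(I:x_1))\bigr\}.
\]

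The main technical step, which I expect to be the chief obstacle, is to prove that $(I : x_1)$ is itself weakly polymatroidal in $S$ with $\deg_{x_1}((I:x_1)) = \deg_{x_1}(I) - 1$, so that the secondary induction hypothesis applies to it. Partitioning $G(I) = A \sqcup B$ according to whether $x_1$ divides the generator, the first observation is that, since $A \neq \emptyset$, no $u_1 \in B$ is minimal in $(I:x_1)$: applying the weak polymatroidal property of $I$ to any pair $(u_2, u_1)$ with $u_2 \in A$ (which first differ at $x_1$) produces $j > 1$ with $x_1 (u_1/x_j) \in I$, so that $u_1/x_j \in (I:x_1)$ properly divides $u_1$. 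A short further check rules out divisions within $\{u/x_1 : u \in A\}$ and divisions of $u/x_1$ by elements of $B$, yielding $G(I:x_1) = \{u/x_1 : u \in A\}$. The weak polymatroidal condition on this set then transports directly, via the bijection $u \mapsto u/x_1$, from the weak polymatroidal condition on the corresponding pairs in $A \subset G(I)$.

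Finally I would compare depths to close the estimate. Lemma \ref{del} gives $\mathrm{depth}_{S'}(S'/I') \geq \mathrm{depth}_S(S/I)$, and applying the standard depth lemma to the short exact sequence
\[
0 \to S/(I : x_1) \xrightarrow{\;\cdot x_1\;} S/I \to S/(I + (x_1)) \to 0,
\]
together with $S/(I+(x_1)) \cong S'/I'$ and Lemma \ref{del}, produces $\mathrm{depth}_S(S/(I:x_1)) \geq \mathrm{depth}_S(S/I)$. Combining these depth bounds with the two induction hypotheses, both terms in the above minimum are at least $\mathrm{depth}_S(S/I)$, completing the inductive step.
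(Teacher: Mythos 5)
Your proof follows the same core architecture as the paper: the $\mathbb{Z}^n$-graded vector space decomposition $S/I = (S'/I') \oplus x_1\cdot S/(I:x_1)$, the resulting inequality $\mathrm{sdepth}_S(S/I) \geq \min\{\mathrm{sdepth}_{S'}(S'/I'),\ \mathrm{sdepth}_S(S/(I:x_1))\}$, Lemma \ref{del} to control $\mathrm{depth}_{S'}(S'/I')$, and a double induction on the number of variables plus a secondary quantity. Where you deviate is in the provenance of two technical facts that the paper simply imports: (a) that $(I:x_1)$ is weakly polymatroidal — the paper cites \cite[Lemma 2.2]{s2}, whereas you supply the (correct) argument that $G(I:x_1)=\{u/x_1 : u\in G(I),\ x_1\mid u\}$ and that the exchange condition transports through $u\mapsto u/x_1$; and (b) that $\mathrm{depth}_S(S/(I:x_1))\geq \mathrm{depth}_S(S/I)$ — the paper cites Rauf's general inequality \cite[Corollary 1.3]{r2}, which holds for every monomial ideal and every monomial $v$, whereas you deduce it from the short exact sequence $0\to S/(I:x_1)\to S/I\to S'/I'\to 0$ together with Lemma \ref{del}. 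Your route for (b) is a pleasant unification, since it makes both depth bounds flow from the single Lemma \ref{del}, at the cost of needing the weakly polymatroidal structure where Rauf's result does not. Your secondary induction on $\deg_{x_1}(I)$ is a valid substitute for the paper's $\sum_{u\in G(I)}\deg(u)$.

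One small point to tighten: when $x_1\in G(I)$ (which forces $\deg_{x_1}(I)=1$), one gets $(I:x_1)=S$, so your secondary induction hypothesis cannot be invoked for $(I:x_1)$; you should note that in this case $S/(I:x_1)=0$, so the second term drops out of the minimum and only the $S'/I'$ summand contributes. The paper sidesteps this by carving out the case $x_1\in I$ separately before forming the decomposition. Similarly, the degenerate case $I'=0$ (i.e.\ every generator divisible by $x_1$) is harmless but worth a sentence, since then $S'/I'=S'$ and the induction hypothesis is not literally applicable.
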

\begin{proof}
We prove the assertion by induction on $n$ and $$\sum_{u\in G(I)} {\rm deg} (u),$$ where $G(I)$ is the set of minimal monomial generators of $I$. If
$n=1$ or $$\sum_{u\in G(I)} {\rm deg} (u)=1,$$ then $I$ is a principal ideal and so we have ${\rm depth}(S/I)=n-1$ and by \cite[Theorem 1.1]{r},
${\rm sdepth}(S/I)=n-1$. Therefore, in
these cases, the assertion is trivial.

We now assume that $n\geq 2$ and
$$\sum_{u\in G(I)} {\rm deg} (u)\geq 2.$$Let $S'=\mathbb{K}[x_2, \ldots, x_n]$ be the polynomial ring obtained from $S$ by deleting the variable $x_1$ and consider the ideals $I'=I\cap S'$ and
$I''=(I:x_1)$. If $$x_1\notin \bigcup_{u\in G(I)}{\rm Supp}(u),$$then the induction hypothesis on $n$ implies that
${\rm depth}(S/I)={\rm depth}(S'/I')+1$. On the other hand, by \cite[Lemma 3.6]{hvz}, we conclude that ${\rm sdepth}(S/I)={\rm
sdepth}(S'/I')+1$.
Therefore, using the induction hypothesis on $n$ we conclude that ${\rm sdepth}(S/I)\geq {\rm depth}(S/I)$.
Therefore, we may assume that $$x_1\in \bigcup_{u\in G(I)}{\rm Supp}(u).$$If $x_1\in I$ (which means $x_1\in G(I)$), then ${\rm depth}(S/I)={\rm depth}(S'/I')$. On the other hand it follows from \cite[Theorem
1.1]{r} and \cite[Lemma 3.6]{hvz} that ${\rm sdepth}(S/I)={\rm
sdepth}(S'/I')$. Therefore the induction hypothesis on $n$ implies that ${\rm sdepth}(S/I)\geq {\rm depth}(S/I)$. Thus we assume that $x_1\notin I$.

Now $S/I=(S'/I'S')\oplus x_1(S/I''S)$ and therefore by definition of  the Stanley depth we have

\[
\begin{array}{rl}
{\rm sdepth}(S/I)\geq \min \{{\rm sdepth}_{S'}(S'/I'S'), {\rm sdepth}_S(S/I'')\}.
\end{array} \tag{1} \label{1}
\]

Using Lemma \cite[Lemma 2.2]{s2}, it follows that $I''$ is a weakly polymatroidal ideal. Hence \cite[Corollary 1.3]{r2} together with the induction hypothesis on $$\sum_{u\in G(I)} {\rm deg}(u)$$ implies that $${\rm sdepth}_S(S/I'')\geq {\rm depth}_S(S/(I:x_1))\geq {\rm depth}_S(S/I).$$

On the other hand, $I'S'$ is a weakly polymatroidal ideal and since $$x_1 \in \bigcup_{u\in G(I)} {\rm Supp}(u),$$using Lemma \ref{del}, we conclude that ${\rm depth}_{S'}(S'/I')\geq {\rm depth}_S(S/I)$ and therefore by
the induction hypothesis on $n$, we conclude that ${\rm sdepth}_{S'}(S'/I'S')\geq {\rm depth}_S(S/I)$. Now the assertions follow by inequality (\ref{1}).
\end{proof}

\begin{rem}
Let $I$ be a weakly polymatroidal ideal of $S$. In Theorem \ref{main}, we showed that $S/I$ satisfies Stanley's conjecture. It is natural to ask whether $I$ itself satisfies Stanley's conjecture. The answer of this question is positive. Indeed, Soleyman Jahan \cite{so} proves that Stanley's conjecture holds true for every monomial ideal with linear quotients.
\end{rem}





\end{document}